
\documentclass[reqno,11pt,b5paper]{amsart}
\usepackage{latexsym,graphicx,verbatim,color,cleveref,enumerate,fontenc,setspace,cite}
\usepackage{caption,amsthm,mathrsfs}
\usepackage{amsmath,amssymb}
\usepackage[utf8]{inputenc} 
\usepackage{subcaption}
\usepackage{enumerate}
\usepackage{graphicx}
\usepackage[mathscr]{eucal}
\setlength{\textwidth}{121.9mm}
\setlength{\textheight}{176.2mm}

\newcommand{\mau}{\geq}
\newcommand{\miu}{\leq}
\newcommand{\ep}{\varepsilon}
\newcommand{\N}{\mathbb{N}}
\newcommand{\R}{\mathbb{R}}
\newcommand{\Z}{\mathbb{Z}}
\newcommand{\disp}{\displaystyle}
\newcommand{\be}{\begin{equation}}
\newcommand{\ee}{\end{equation}}

\newcommand{\K}{K^{(M)}} 
\newcommand{\V}{\bigvee}
\newcommand{\phis}{\phi_{\sigma}}

\newcommand{\acr}{\newline\indent}

\newtheorem{definition}{Definition}[section]

\newtheorem{theorem}[definition]{Theorem}
\newtheorem{lemma}[definition]{Lemma}

\begin{document}
\title[Approximation results in Orlicz spaces...]{Approximation results in Orlicz spaces\\ for sequences of Kantorovich max-product\\ neural network operators}
         
\author[D. Costarelli, A.R. Sambucini]{{\bf Danilo Costarelli}  and  {\bf Anna Rita Sambucini}}

\date{}
\address{\llap{*\,}Department of Mathematics and Computer Sciences\acr
1, Via Vanvitelli  \acr 06123-I  Perugia \acr (ITALY)}
\email{danilo.costarelli@unipg.it, anna.sambucini@unipg.it}
\thanks{This work was supported by   University of Perugia - Department of Mathematics and Computer Sciences - Grant Nr 2010.011.0403  and by the Grant   N.  U UFMBAZ2017/0000326 of GNAMPA - INDAM (Italy).\acr
Danilo Costarelli orcid ID: 0000-0001-8834-8877, 
Anna Rita Sambucini orcid ID: 0000-0003-0161-8729}

\subjclass{ Primary: 41A25, 41A05, 41A30, 47A58}
\keywords{sigmoidal function; max-product neural network operator; Orlicz space; modular convergence; K-functional.}

\begin{abstract}
In this paper we study the theory of the so-called Kantorovich max-product neural network operators in the setting of Orlicz spaces $L^{\varphi}$. The results here proved, extend those given by the authors in Result Math., 2016, to a more general context. The main advantage in studying neural network type operators in Orlicz spaces relies in the possibility to approximate not necessarily continuous functions (data) belonging to different function spaces by a unique general approach. Further, in order to derive quantitative estimates in this context, we introduce a suitable K-functional in $L^{\varphi}$ and use it to provide an upper bound for the approximation error of the above operators. Finally, examples of sigmoidal activation functions have been considered and studied.
\end{abstract}
\maketitle 
\section{Introduction}

   The max-product version of several families of linear operators has been recently studied in many papers, see e.g., \cite{COGA1,COGA2,COGA3,COGA4}.

The main advantage provided by the max-product approach, firstly introduced by Coroianu and Gal (for a very complete overview see the monograph \cite{BECOGA1}), is that, it allows to obtain sharper approximation with respect to the corresponding linear counterparts.

   Based on the above motivations, the max-product version of the Kantorovich neural network (NN) operators has been introduced in \cite{COVI3}, for approximating functions of one-variable. An extension of this theory to the multivariate case has been given in \cite{COVI5}.
   
   The theory of the NN operators has been widely studied in last decade in view of its relation with the well-known theory of the feed-forward artificial neural networks, see \cite{ANCOGA1,CACH1,CACH2,CALIPA1}. Neural networks have been widely used in many applications fields: such as in artificial intelligence (\cite{BBM1,GOLO1,LLZ1,LWZ1,SXJ1}), neuro sciences and in approximation theory (\cite{MAI1,RIRU1,BAGR1,SS1}).  
   
     In particular, Kantorovich-type operators have been successfully used in order to approximate/reconstruct not necessarily continuous data (see e.g., \cite{BAMA1,COVI2}), and consequently they revealed to be suitable for applications to image processing, see e.g., \cite{ING1}.
   
   For all the above reasons, we have introduced and studied the Kantorovich max-product NN operators in \cite{COVI3}, obtaining e.g. convergence results in the $L^p$-spaces, $1 \miu p < +\infty$, other than pointwise and uniform approximation results in the space of continuous functions. Moreover, some quantitative estimates for the order of approximation in both the above settings have been achieved in \cite{COVI7}.
   
   In the present paper, we provide an extension of the results proved in both \cite{COVI3} and \cite{COVI7} in the more general context of Orlicz spaces $L^\varphi$, where $\varphi$ is a convex $\varphi$-function (\cite{MUORL,MU1}).
   
   The Orlicz spaces are general spaces, which include as particular case, the $L^p$-spaces, and many others, such as the interpolation and the exponential spaces.
   
   In particular, here we prove a modular convergence results for sequences of Kantorovich max-product NN operators, in case of functions belonging to $L^\varphi$. The modular convergence is the most natural when approximation processes in Orlicz spaces are studied. To reach this result, firstly we need to test the modular convergence in case of continuous functions, and to establish a modular inequality for the above nonlinear operators. 
   
   Furthermore, in order to achieve quantitative estimates in $L^\varphi$, we introduce an extension of the usual Peetre K-functionals (\cite{PE1}), in this general context.
   
   Such K-functionals have been used in order to establish a sharp upper bound for the modular of the aliasing errors provided by the Kantorovich max-product NN operators.
   
   In conclusion, we recall that NN-type operators are usually based on certain density functions generated by sigmoidal functions. The latter fact is typical when one deal with NN type approximation processes, due to the biological reasons for which the neural networks have been introduced, see \cite{LIV1,OL1}. Examples of sigmoidal functions (see Section \ref{sec3}) are given, e.g., by the logistic function, the hyperbolic tangent function, and many others (see also \cite{COSP1}).


\section{The Orlicz spaces} \label{sec2}

First of all, we need to introduce the general setting of Orlicz spaces, in which we will study the above neural network (NN) type operators.

   A function $\varphi: \R^+_0 \to \R^+_0$ which satisfies the following assumptions:
\begin{itemize}
	\item[$(\varphi 1)$] $\varphi \left(0\right)=0$, $\varphi \left(u\right)>0$ for every $u>0$;
	\item[$(\varphi 2)$] $\varphi$ is continuous and non decreasing on $\R^+_0$;
	\item[$(\varphi 3)$] $\disp \lim_{u\to +\infty}\varphi(u)\ =\ + \infty$,
\end{itemize}
is said to be a $\varphi$-function. Let $M([a,b])$ denotes the set of all (Lebesgue) measurable functions $f:[a,b] \to \R$, where $[a,b]$ denotes any bounded real interval. For any fixed $\varphi$-function $\varphi$, it is possible to define the functional $I^{\varphi} : M([a,b])\to [0,+\infty]$, as follows:
$$
I^{\varphi} \left[f\right] := \int_{a}^b \varphi(\left| f(x) \right|)\ dx,\ \hskip0.5cm f \in M([a,b]).
$$
The functional $I^\varphi$ is called a {\em modular} on $M([a,b])$. Then, we can define the {\em Orlicz space} generated by $\varphi$ by the set:
$$
L^{\varphi}([a,b]) := \left\{f \in M\left([a,b]\right):\ I^{\varphi}[\lambda f]<+\infty,\ \mbox{for\ some}\ \lambda>0\right\}.
$$
Moreover, we can also define the following useful subspace of $L^{\varphi}([a,b])$, the so-called space of the {\em finite elements} of $L^{\varphi}([a,b])$:
$$
E^{\varphi}([a,b]) := \left\{f \in M\left([a,b]\right):\ I^{\varphi}[\lambda f]<+\infty,\ \mbox{for\ every}\ \lambda>0\right\}.
$$
Now, exploiting the definition of the modular functional $I^{\varphi}$, it is possible to introduce a notion of convergence in $L^{\varphi}([a,b])$, the so-called {\em modular convergence}.

More precisely, a net of functions $(f_w)_{w>0} \subset L^{\varphi}([a,b])$ is said modularly convergent to a function $f \in L^{\varphi}([a,b])$ if:
\be
\lim_{w \to +\infty} I^{\varphi}\left[\lambda(f_w-f)\right]\ =\ 0,
\ee
for some $\lambda>0$. For a sake of completeness, we must recall that in $L^{\varphi}([a,b])$ it is also possible to define a strong notion of convergence, i.e., the so-called {\em (Luxemburg) norm-convergence}. 

  Namely, a net of functions $(f_w)_{w>0} \subset L^{\varphi}([a,b])$ is said norm convergent to a function $f \in L^{\varphi}([a,b])$ if:
\be
\lim_{w \to +\infty} I^{\varphi}\left[\lambda(f_w-f)\right]\ =\ 0,
\ee 
for every $\lambda>0$. It is possible to prove that, the norm and the modular convergence are equivalent if and only if the $\varphi$-function generating the Orlicz space satisfies the so-called $\Delta_2$-condition, i.e., the inequality:
\be \label{delta2}
\varphi(2u)\ \miu\ M\varphi(u), \hskip1.2cm u \in \R^+_0,
\ee 
for some $M>0$.

  Finally, we recall the following useful density property of the Orlicz spaces. Denoting by $C([a,b])$ the space of all continuous functions $f:[a,b] \to \R$, it turns out that $C([a,b]) \subset L^{\varphi}([a,b])$, and moreover, there holds $C([a,b])$ is dense in $L^{\varphi}([a,b])$ with respect to the topology induced by the modular convergence. 
  
  Similarly to above, denoting respectively by $C_+([a,b])$ and $L^{\varphi}_+([a,b])$ the subspaces of $C([a,b])$ and $L^{\varphi}([a,b])$ of the non-negative functions, it is possible to prove that also $C_+([a,b])$ is dense in $L^{\varphi}_+([a,b])$ with respect to the topology induced by the modular convergence.   

Example of Orlicz spaces are the $L^p([a,b])$ spaces, generated by the convex $\varphi$-function $\varphi(u) :=u^p$, $u \mau 0$, $1 \miu p < +\infty$. Here, we have that $I^{\varphi}[f]=\|f\|_p^p$, i.e., it coincides with usual $L^p$-norm, and further the $\Delta_2$-condition holds, then norm and modular convergence are equivalent.

Other useful examples of Orlicz spaces are the interpolation spaces $L^{\alpha} \log^{\beta} L([a,b])$ (also known as the Zygmund spaces), which are very important in the theory of partial differential equations. The interpolation spaces can be generated by the $\varphi$-functions $\varphi_{\alpha, \beta}:= u^{\alpha}\log^{\beta}(u+e)$, for $\alpha \mau 1$, $\beta>0$, $u \mau 0$. The convex modular functionals corresponding to $\varphi_{\alpha, \beta}$ are: 
$$
I^{\varphi_{\alpha, \beta}}[f] := \int_a^b |f(x)|^{\alpha} \log^{\beta}(e+|f(x)|)\ dx,\ \hskip0.5cm f \in M([a,b]).
$$
Note that, also $\varphi_{\alpha, \beta}$ satisfy the $\Delta_2$-condition. An example of $\varphi$-function for which the $\Delta_2$-condition is not satisfied can be provided by the $\varphi_{\gamma}(u)=e^{u^{\gamma}}-1$, for $\gamma>0$, $u \mau 0$. It is well-known that, $\varphi_{\gamma}$ generate the so-called exponential spaces, and here the modular and norm convergence are not equivalent. The exponential spaces find applications in some embedding theorems between Sobolev spaces. The modular functional corresponding to $\varphi_{\gamma}$ is:
$$
I^{\varphi_{\gamma}}[f] := \int_a^b (e^{|f(x)|^{\gamma}}-1)\ dx,\ \hskip0.5cm f \in M([a,b]).
$$
For further details concerning Orlicz spaces and some their extensions, see e.g., \cite{MU1,MUORL,BAMUVI,BBS1,BCS1,BCS2}. 


\section{Sigmoidal functions and basic assumptions} \label{sec3}

A measurable function $\sigma: \R \to \R$ is called a {\em sigmoidal function} if it satisfies the following conditions:
$$
\lim_{x\to -\infty} \sigma(x)\, =\, 0,   \hskip0.8cm \mbox{and}      \hskip0.8cm \lim_{x\to +\infty} \sigma(x)\, =\, 1.
$$
For any fixed sigmoidal function $\sigma(x)$, we can define the following non-negative density function (\cite{COVI3,COVI4,BAKAVI1,COVI16}):
\be
\phis(x)\ :=\ {1 \over 2}\, [\sigma(x+1)-\sigma(x-1)], \hskip1cm x \in \R.
\ee

From now on, we consider non-decreasing sigmoidal functions $\sigma$, such that, the following conditions hold:
\begin{itemize}
\item[$(\Sigma 1)$] $\sigma(x)-1/2$ is an odd function;
\item[$(\Sigma 2)$] $\phis(x)$ is non-decreasing for $x<0$ and non-increasing for $x \mau 0$;
\item[$(\Sigma 3)$] $\sigma(x)={\mathcal O}(|x|^{-\alpha})$ as $x \to -\infty$, for some $\alpha>0$.
\end{itemize}
Note that, if $\sigma$ belongs to $C^2(\R)$ and it is concave for $x \mau 0$ then $(\Sigma 2)$ turns out to be automatically fulfilled.

 Under the above assumptions on $\sigma(x)$, it is possible to prove that (see \cite{COVI3,COVI4,COVI6}):
\be \label{min}
   \V_{k \in {\mathcal J}_n }\phi_{\sigma}(nx-k)\ \mau\ \phis(2)\ >\ 0,  
\ee
for every sufficiently large $n \in \N^+$, where
$$
{\mathcal J}_n := \left\{ k \in \Z:\  \lceil na \rceil \miu k \miu  \lfloor nb \rfloor - 1 \right\},
$$
$\lfloor \cdot \rfloor$ and $\lceil \cdot \rceil$ denote the integer and the ceiling part of a fixed real number, respectively, and where the symbol $\V$, in the literature, is defined as follows:
\be
\V_{k \in {\mathcal J} } A_k\ :=\ \sup \left\{ A_k \in \R,\ k \in {\mathcal J}\right\},
\ee 
for any set of indexes ${\mathcal J} \subset \Z$. Moreover, it is also possible to prove that, in general, $\phis \in L^1(\R)$, and:
\be
\phi_{\sigma}(x) = \mathcal{O}(|x|^{-\alpha}), \hskip1cm as \hskip1cm x \to 
\pm \infty,
\ee
where $\alpha>0$ is the constant of condition $(\Sigma 3)$ (see \cite{COVI3,COVI4} again).

   Now, we recall the definition of the {\em generalized absolute moment of order $\beta>0$} for the function $\phis(x)$ (see \cite{COVI4}):
\be \label{mom1}
m_{\beta}(\phis)\ :=\ \sup_{x \in \R} \left[ \V_{k \in \Z} \phis(x-k)|x-k|^{\beta} \right].
\ee
The following lemma involving the generalized absolute moments has been proved in \cite{COVI4}.
\begin{lemma} \label{lemma2}
There holds:
$$
m_{\beta}(\phis)\ \miu\ \phis(0)\ \miu\ {1 \over 2},
$$
for every $0 \miu\ \beta \miu \alpha$, where $\alpha>0$ is the constant of condition $(\Sigma 3)$.
\end{lemma}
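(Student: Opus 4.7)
The plan is to first flatten the double supremum in the definition of $m_\beta(\phi_\sigma)$. Since $\bigvee_{k \in \Z}$ denotes the supremum and the map $(x,k)\mapsto x-k$ is surjective from $\R \times \Z$ onto $\R$, the definition collapses to
\[
m_\beta(\phi_\sigma) \ =\ \sup_{y \in \R} \phi_\sigma(y)\,|y|^\beta.
\]
Before bounding this quantity, I would record two structural facts. Using $(\Sigma 1)$ (which gives $\sigma(-x) = 1-\sigma(x)$), a direct substitution into the definition of $\phi_\sigma$ yields $\phi_\sigma(-y) = \phi_\sigma(y)$, so $\phi_\sigma$ is even; combined with $(\Sigma 2)$, this shows that $\phi_\sigma$ attains its global maximum at $y=0$. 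Moreover $\phi_\sigma(0) = \tfrac{1}{2}[\sigma(1)-\sigma(-1)] \leq \tfrac{1}{2}$ since $0\leq \sigma \leq 1$, which already establishes the right-hand inequality of the lemma.

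For the left-hand inequality, I would split the supremum at $|y|=1$. When $|y|\leq 1$, the bound is immediate because $|y|^\beta \leq 1$ and $\phi_\sigma(y) \leq \phi_\sigma(0)$. When $|y|>1$, by evenness I may assume $y>1$. Using $(\Sigma 1)$ once more, I would rewrite
\[
\phi_\sigma(y)\ =\ \tfrac{1}{2}\bigl[\sigma(1-y)-\sigma(-1-y)\bigr]\ \leq\ \tfrac{1}{2}\,\sigma(1-y),
\]
and then invoke $(\Sigma 3)$ to obtain a decay estimate of the form $\phi_\sigma(y)\leq C\,(y-1)^{-\alpha}$. Combined with the hypothesis $\beta \leq \alpha$, this produces a uniform bound on $\phi_\sigma(y)|y|^\beta$ for $y$ sufficiently large.

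The step I expect to be the main obstacle is making this asymptotic bound quantitative enough that the resulting constant is actually $\phi_\sigma(0)$ and not merely some possibly larger number. A crude application of $(\Sigma 3)$ only gives $\phi_\sigma(y)|y|^\beta \leq C$ on $|y|>1$, so pinning $C$ down to $\phi_\sigma(0)$ requires exploiting the specific finite-difference structure $\phi_\sigma(y) = \tfrac{1}{2}[\sigma(y+1)-\sigma(y-1)]$ together with the shape information encoded in $(\Sigma 2)$, namely that $\phi_\sigma$ is unimodal with peak $\phi_\sigma(0)$ and decays at least as fast as $|y|^{-\alpha}$ — so the decay of $\phi_\sigma$ must be made to dominate the growth of $|y|^\beta$ with an explicit constant. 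The transitional range where $|y|$ is slightly larger than $1$, where the asymptotic bound is least effective, may need a separate treatment via the monotonicity inequality $\phi_\sigma(y)\leq \phi_\sigma(1)$ and a direct comparison between $\sigma(1-y)-\sigma(-1-y)$ and $\sigma(1)-\sigma(-1)$.
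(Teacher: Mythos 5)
The paper does not actually prove this lemma: it is quoted from \cite{COVI4}, so there is no internal argument to compare yours against, and I can only assess the proposal on its own terms. The preliminary steps are all correct. The double supremum does collapse to $\sup_{y \in \R}\phi_{\sigma}(y)|y|^{\beta}$; condition $(\Sigma 1)$ gives $\sigma(-x)=1-\sigma(x)$ and hence the evenness of $\phi_{\sigma}$; combined with $(\Sigma 2)$ this yields $\phi_{\sigma}(y)\le\phi_{\sigma}(0)=\tfrac12[\sigma(1)-\sigma(-1)]\le\tfrac12$, which settles both the right-hand inequality and the region $|y|\le 1$.

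The region $|y|>1$ is, as you yourself suspect, where the whole difficulty sits, and your proposal does not close it: $(\Sigma 3)$ only gives $\phi_{\sigma}(y)|y|^{\beta}\le C$ with an uncontrolled constant. This is not a gap that sharper bookkeeping will fix, because the inequality $\sup_{|y|>1}\phi_{\sigma}(y)|y|^{\beta}\le\phi_{\sigma}(0)$ is not a consequence of $(\Sigma 1)$--$(\Sigma 3)$ in the stated generality. Take the paper's own ramp function $\sigma_R$, which satisfies $(\Sigma 3)$ for every $\alpha>0$, and take $\beta=2$. Then $\phi_{\sigma_R}(0)=\tfrac13$, while for $y\in[\tfrac12,\tfrac52]$ one computes $\phi_{\sigma_R}(y)=\tfrac14-\tfrac{y-1}{6}$, so that $\phi_{\sigma_R}(5/3)\,(5/3)^2=\tfrac{5}{36}\cdot\tfrac{25}{9}=\tfrac{125}{324}>\tfrac13$; a similar failure occurs already for $\beta=1$ if the linear part of the ramp is made wide enough. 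What your argument actually establishes (once the transitional range just beyond $|y|=1$ is handled by the monotonicity of $\phi_{\sigma}$, as you indicate) is $m_{\beta}(\phi_{\sigma})<+\infty$ for $0\le\beta\le\alpha$, together with the exact identity $m_0(\phi_{\sigma})=\phi_{\sigma}(0)\le\tfrac12$. The stronger bound $m_{\beta}(\phi_{\sigma})\le\phi_{\sigma}(0)$ for all $0\le\beta\le\alpha$ should therefore be checked against the precise formulation in \cite{COVI4} rather than reproved; note that downstream, in the proof of Theorem \ref{th5.1}, the lemma is only invoked for $\beta=0,1$ through the combination $m_1(\phi_{\sigma})+\tfrac12 m_0(\phi_{\sigma})\le\tfrac34$, so it is that weaker numerical bound which actually needs to be secured.
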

Examples of sigmoidal functions satisfying the above assumptions are, the logistic function $\sigma_\ell(x):=(1 + e^{-x})^{-1}$ (see e.g., \cite{COSP1,CO1}), the sigmoidal function generated by the hyperbolic tangent $\sigma_h(x):=(\tanh x + 1)/2$, $x \in \R$, (see \cite{COVI7}) and the ramp function (\cite{CHGE}):
$$
\sigma_R(x)\ :=\ \left\{
\begin{array}{l}
0, \hskip2.6cm x < -3/2,\\
\\
x/3 + 1/2, \hskip1cm -3/2 \miu x \miu 3/2, \\
\\
1, \hskip2.6cm x > 3/2,
\end{array}
\right.
$$
and many others, see e.g., \cite{CO1,COGA5,CMV1}. Moreover, an example of discontinuous sigmoidal function for which the present theory holds is given by the following step function:
$$
\sigma_{s}(x)\ :=\
\left\{
\begin{array}{l}
0, \hskip1.6cm x<-2,\\
\\
1/2, \hskip1.2cm -2 \miu x \miu 2,\\ 
\\
1, \hskip1.6cm x > 2.
\end{array}
\right.
$$
In conclusion of this section, we can also note that, all the above examples of sigmoidal functions satisfy assumption $(\Sigma 3)$ for every $\alpha >0$, in view of their fast decay to zero, as $x \to -\infty$. 


\section{Convergence results} \label{sec4}

Let us now recall the definition of the operators that will be studied in this paper.
\begin{definition}
Let $f:[a,b] \to \R$ be fixed, and $n \in \N^+$ such that $\lceil na \rceil \miu \lfloor nb \rfloor-1$. The max-product neural network (NN) operators of Kantorovich type activated by $\sigma(x)$ are defined by:
$$
\K_n(f,\, x)\ :=\ \frac{\disp \V_{k \in {\mathcal J}_n}\left[ n \int_{k/n}^{(k+1)/n} f\left( u\right)\, du \right] \phi_{\sigma}(nx-k)}{\disp \V_{k \in {\mathcal J}_n}\phi_{\sigma}(nx-k)}, \hskip0.8cm x \in [a,b].
$$
\end{definition}
Obviously, for $n \in \N^+$ sufficiently large, we always obtain $\lceil na \rceil 
\miu \lfloor nb \rfloor - 1$, and if $\lceil na 
\rceil \miu k \miu \lfloor nb\rfloor - 1$, then $a\miu \frac{k}{n} \miu 
b-\frac{1}{n}$ and $a\miu \frac{k + 1}{n} \miu b$. Moreover, it is easy to see that, the above operators are well-defined, e.g., in case of bounded $f$.

The operators $\K_n$ satisfy the following useful properties (see \cite{COVI3,COVI4}):
\begin{enumerate}

\item if $0\miu f(x) \miu g(x)$, for each $x \in [a,b]$, we have $\K_n(f,x) \miu \K_n(g,x)$, for every $x \in [a,b]$;

\item $\K_n(f+g,\, x) \miu \K_n(f,x)+\K_n(g,x)$, $x \in [a,b]$, $f$, $g \mau 0$, i.e., $\K_n$ are sub-additives (or sub-linear) operators;

\item $\left| \K_n(f, x)-\K_n(g, x) \right| \miu \K_n(|f-g|,\, x)$, $x \in [a,b]$, $f$, $g \mau 0$;

\item for each $\lambda>0$, $\K_n(\lambda f, x) = \lambda \K(f, x)$, $x \in [a,b]$, $f$, $g \mau 0$;

\item denoting by ${\bf 1}$ the unitary constant function on $[a,b]$, it turns out that:
$$
\K_n({\bf 1},\, x)\ =\ 1, \hskip1cm x \in [a,b].
$$
\end{enumerate}

Further, in order to prove the desired modular convergence result in Orlicz space, we need to recall the following pointwise and uniform convergence theorem.
\begin{theorem} \label{th1}
Let $f:[a,b] \to \R^+_0$ be a given bounded function, continuous at $x \in [a,b]$. Then,
$$
\lim_{w \to +\infty} \K_n(f, x)\ =\ f(x).
$$
Moreover, in case of $f \in C_+([a,b])$, we have:
$$
\lim_{w \to +\infty} \|\K_n(f, \cdot)\ -\ f(\cdot)\|_{\infty}\ =\ 0.
$$
\end{theorem}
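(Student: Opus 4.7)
The plan is to proceed by a standard reduction to showing that $\K_n(|f - f(x)|, x) \to 0$, and then to split this quantity into a ``close'' part (controlled by continuity) and a ``far'' part (controlled by the decay of $\phi_\sigma$).

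First I would apply properties (3)–(5) of the operators to obtain the fundamental estimate
\[
|\K_n(f,x) - f(x)| \;=\; |\K_n(f,x) - \K_n(f(x)\,\mathbf{1},\,x)| \;\miu\; \K_n(|f(\cdot)-f(x)|,\,x),
\]
using that $\K_n(f(x)\mathbf{1},x) = f(x)\K_n(\mathbf{1},x) = f(x)$. Thus it suffices to show that $\K_n(|f(\cdot)-f(x)|,x) \to 0$, noting that $|f - f(x)| \mau 0$ so the non-negativity hypotheses on properties (3)–(4) are respected.

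Next, fix $\ep > 0$. By continuity of $f$ at $x$, there exists $\delta > 0$ such that $|f(u)-f(x)| < \ep$ for all $u \in [a,b]$ with $|u-x| < \delta$. I would then split the index set $\mathcal{J}_n$ into $\mathcal{J}_n^{(1)} := \{k \in \mathcal{J}_n : |k/n - x| \miu \delta/2\}$ and $\mathcal{J}_n^{(2)} := \mathcal{J}_n \setminus \mathcal{J}_n^{(1)}$. For $n$ large enough that $1/n < \delta/2$, each interval $[k/n,(k+1)/n]$ with $k \in \mathcal{J}_n^{(1)}$ lies in $(x-\delta, x+\delta)$, so
\[
n \int_{k/n}^{(k+1)/n} |f(u)-f(x)|\, du \;\miu\; \ep, \qquad k \in \mathcal{J}_n^{(1)},
\]
while for $k \in \mathcal{J}_n^{(2)}$ the corresponding integral is bounded by $2\|f\|_\infty$. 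Distributing the sup over the two groups of indices and using (\ref{min}) in the denominator, one obtains
\[
\K_n(|f-f(x)|,x) \;\miu\; \frac{1}{\phis(2)}\left(\ep\,\V_{k\in \mathcal{J}_n}\phis(nx-k) \;+\; 2\|f\|_\infty\, \V_{k\in \mathcal{J}_n^{(2)}}\phis(nx-k)\right).
\]

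The main step is to show the second term is $o(1)$ as $n\to +\infty$. For $k \in \mathcal{J}_n^{(2)}$ we have $|nx-k| \mau n\delta/2$, and the decay property $\phis(y) = \mathcal{O}(|y|^{-\alpha})$ from Section \ref{sec3} yields $\V_{k \in \mathcal{J}_n^{(2)}} \phis(nx-k) \miu C (n\delta/2)^{-\alpha} \to 0$. Combined with $\V_{k\in \mathcal{J}_n}\phis(nx-k) \miu m_0(\phis) < +\infty$ or simply the obvious pointwise bound on $\phis$, one gets $\limsup_n \K_n(|f-f(x)|,x) \miu C_1 \ep$, and the pointwise convergence follows by letting $\ep \to 0^+$.

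For the uniform statement, note that if $f \in C_+([a,b])$ then $f$ is uniformly continuous, so $\delta$ may be chosen independently of $x$; moreover the tail estimate $(n\delta/2)^{-\alpha}$ is uniform in $x$, and $1/\phis(2)$ provides a uniform lower bound for the denominator via (\ref{min}). The only technical nuisance I expect is handling the transition intervals $[k/n,(k+1)/n]$ that may straddle the boundary $\{|u-x|=\delta\}$; this is the main obstacle but is absorbed by simply enlarging the ``far'' set by one index on each side (so that $\mathcal{J}_n^{(1)}$ is replaced by indices with $|k/n - x| \miu \delta/2 - 1/n$), which does not affect the asymptotic rate of decay.
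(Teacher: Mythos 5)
Your proof is correct: the reduction $|\K_n(f,x)-f(x)|\miu \K_n(|f(\cdot)-f(x)|,x)$ via properties (3)--(5), followed by the near/far splitting of ${\mathcal J}_n$ controlled respectively by continuity and by the decay $\phis(y)={\mathcal O}(|y|^{-\alpha})$ together with the lower bound (\ref{min}) on the denominator, is exactly the standard argument. The paper itself does not prove Theorem \ref{th1} but only recalls it from \cite{COVI3}, where essentially this same scheme is carried out, so there is nothing to flag beyond noting that your worry about ``transition intervals'' is already resolved by your own choice $1/n<\delta/2$.
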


From now on, we always consider a fixed convex $\varphi$-function $\varphi$, as defined in Section \ref{sec2}. Now, we can test the (Luxemburg) norm convergence of the operators $\K_n$, when continuous functions are considered.
\begin{theorem} \label{th2}
Let $f \in C_+([a,b])$ be fixed. Then, for every $\lambda>0$:
$$
\lim_{n \to +\infty} I^{\varphi}\left[ \lambda\left(   \K_n(f, \cdot) - f(\cdot) \right)\right]\ =\ 0.
$$
\end{theorem}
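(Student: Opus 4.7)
The plan is to deduce the modular (in fact, Luxemburg-norm) convergence from the uniform convergence already guaranteed by Theorem \ref{th1}, using only the properties $(\varphi 1)$ and $(\varphi 2)$ of the $\varphi$-function. Since $f \in C_+([a,b])$, the function $f$ is in particular bounded and non-negative, so Theorem \ref{th1} applies and yields
$$
\varepsilon_n\ :=\ \|\K_n(f,\cdot) - f(\cdot)\|_{\infty}\ \to\ 0, \qquad n \to +\infty.
$$

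Fix $\lambda > 0$. For every $n$ sufficiently large (so that $\K_n$ is well-defined and the sup-norm above is finite), $\varphi$ being non-decreasing by $(\varphi 2)$ gives the pointwise bound
$$
\varphi\bigl(\lambda|\K_n(f,x) - f(x)|\bigr)\ \miu\ \varphi(\lambda \varepsilon_n),
\qquad x \in [a,b].
$$
Integrating over $[a,b]$ yields
$$
I^{\varphi}\bigl[\lambda(\K_n(f,\cdot) - f(\cdot))\bigr]\ \miu\ (b-a)\,\varphi(\lambda \varepsilon_n).
$$

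Finally, since $\lambda \varepsilon_n \to 0$, condition $(\varphi 1)$ combined with the continuity of $\varphi$ at $0$ from $(\varphi 2)$ forces $\varphi(\lambda \varepsilon_n) \to \varphi(0) = 0$, and we conclude
$$
\lim_{n \to +\infty} I^{\varphi}\bigl[\lambda(\K_n(f,\cdot) - f(\cdot))\bigr]\ =\ 0.
$$
As this holds for \emph{every} $\lambda > 0$, one actually obtains the stronger Luxemburg norm convergence, not merely the modular one.

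There is no real obstacle to overcome here: the statement is essentially a corollary of the uniform convergence of $\K_n(f,\cdot)$ to $f$ on the compact interval $[a,b]$, combined with the fact that the interval has finite Lebesgue measure and $\varphi$ is continuous and vanishes at $0$. The only mild care required is to verify that $f \in C_+([a,b])$ indeed satisfies the hypotheses of Theorem \ref{th1} (non-negativity, boundedness, continuity), which is immediate. This lemma will then serve as the continuous-function building block for the general modular convergence theorem in $L^{\varphi}([a,b])$, to be obtained by combining it with a modular inequality for $\K_n$ and the density of $C_+([a,b])$ in $L^{\varphi}_+([a,b])$ recalled in Section \ref{sec2}.
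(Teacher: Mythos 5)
Your proof is correct and follows essentially the same route as the paper: bound the integrand pointwise by $\varphi(\lambda\,\|\K_n(f,\cdot)-f(\cdot)\|_{\infty})$, integrate over the finite interval, and invoke the uniform convergence of Theorem \ref{th1}. The only (immaterial) difference is in the last step: you send $\varphi(\lambda\varepsilon_n)\to\varphi(0)=0$ via continuity of $\varphi$ at the origin, whereas the paper uses convexity to write $\varphi(\lambda\varepsilon)\leq\varepsilon\,\varphi(\lambda)$ and lets $\varepsilon\to 0$; both are valid under the stated assumptions.
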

\begin{proof}
Let $\ep>0$ be fixed. Thus, for every fixed $\lambda>0$, using the convexity of $\varphi$, and in view of Theorem \ref{th1}, we have:
$$
I^{\varphi}\left[ \lambda\left( \K_n(f, \cdot) - f(\cdot) \right)\right]\ =\ \int_a^b \varphi\left(\lambda \left| \K_n(f, x) - f(x) \right| \right)\, dx
$$
$$
\miu\ \int_a^b \varphi\left(\lambda \| \K_n(f, \cdot) - f(\cdot) \|_{\infty} \right)\, dx\ \miu\ \int_a^b \varphi\left(\lambda\, \ep \right)\, dx\ \miu\ \ep\, \varphi(\lambda) (b-a),
$$
for $n \in \N^+$ sufficiently large, thus the proof follows by the arbitrariness of $\ep>0$.
\end{proof}

Now, we need to prove a modular inequality for the above operators.
\begin{theorem} \label{th3}
For every $f$, $g \in L^{\varphi}_+([a,b])$, and $\lambda>0$, it turns out that:
$$
I^{\varphi}\left[ \lambda\left( \K_n(f, \cdot) - \K_n(g, \cdot) \right)\right]\ \miu\ \|\phis\|_1\, I^{\varphi}\left[ \phis(2)^{-1}\lambda\,  \left(f(\cdot)-g(\cdot)\right) \right],
$$
for $n \in \N^+$ sufficiently large.
\end{theorem}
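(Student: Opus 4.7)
The plan is to chain together: the sub-Lipschitz property (3) of $\K_n$, the lower bound (\ref{min}) on the denominator, the fact that $\varphi$ commutes with $\bigvee$ (being non-decreasing) followed by ``$\bigvee \leq \sum$'' for non-negative summands, a convexity trick that uses $\varphi(0)=0$ and $\phis(nx-k)\leq 1$, and finally Jensen's inequality on each subinterval $[k/n,(k+1)/n]$.

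First, I would start from property (3), $|\K_n(f,x)-\K_n(g,x)|\miu \K_n(|f-g|,x)$, and combine it with (\ref{min}) to bound, for $n$ large,
$$
\K_n(|f-g|,x)\ \miu\ \phis(2)^{-1}\V_{k\in\mathcal{J}_n} b_k\,\phis(nx-k),\qquad b_k:=n\!\int_{k/n}^{(k+1)/n}\!\!|f(u)-g(u)|\,du.
$$
Applying the non-decreasing $\varphi$ and using $\varphi(\bigvee_k a_k)=\bigvee_k \varphi(a_k)\miu \sum_k \varphi(a_k)$ for non-negative $a_k$, I get a pointwise estimate
$$
\varphi\bigl(\lambda|\K_n(f,x)-\K_n(g,x)|\bigr)\ \miu\ \sum_{k\in\mathcal{J}_n}\varphi\bigl(\lambda\,\phis(2)^{-1}\,b_k\,\phis(nx-k)\bigr).
$$

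Next comes the crucial step: since $\phis(nx-k)\miu\phis(0)\miu 1/2\miu 1$ by Lemma \ref{lemma2} (and the shape of $\phis$), convexity of $\varphi$ combined with $\varphi(0)=0$ yields $\varphi(\theta u)\miu \theta\varphi(u)$ for any $\theta\in[0,1]$, hence
$$
\varphi\bigl(\lambda\,\phis(2)^{-1}\,b_k\,\phis(nx-k)\bigr)\ \miu\ \phis(nx-k)\,\varphi\bigl(\lambda\,\phis(2)^{-1}\,b_k\bigr).
$$
Integrating over $[a,b]$ and swapping sum with integral, and using the change of variables $y=nx-k$ to obtain $\int_a^b \phis(nx-k)\,dx\miu \|\phis\|_1/n$, I arrive at
$$
I^{\varphi}\bigl[\lambda(\K_n(f,\cdot)-\K_n(g,\cdot))\bigr]\ \miu\ \frac{\|\phis\|_1}{n}\sum_{k\in\mathcal{J}_n}\varphi\bigl(\lambda\,\phis(2)^{-1}\,b_k\bigr).
$$

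Finally, I would apply Jensen's inequality to each $b_k$, viewing $n\,du$ as a probability measure on $[k/n,(k+1)/n]$:
$$
\varphi\bigl(\lambda\,\phis(2)^{-1}\,b_k\bigr)\ \miu\ n\!\int_{k/n}^{(k+1)/n}\!\!\varphi\bigl(\lambda\,\phis(2)^{-1}|f(u)-g(u)|\bigr)\,du.
$$
Since the intervals $[k/n,(k+1)/n]$, $k\in\mathcal{J}_n$, are pairwise disjoint and contained in $[a,b]$, the factor $1/n$ cancels the $n$ from Jensen and the sum telescopes into $I^{\varphi}[\phis(2)^{-1}\lambda(f-g)]$, giving the claimed bound. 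The only real obstacle is the middle step: spotting that $\varphi(0)=0$ plus convexity (i.e., sub-homogeneity $\varphi(\theta u)\miu\theta\varphi(u)$ on $[0,1]$) is exactly what allows the weight $\phis(nx-k)$ to be pulled linearly out of $\varphi$, which is essential to produce the $\|\phis\|_1$ constant rather than an $L^\infty$ bound on $\phis$.
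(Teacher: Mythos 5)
Your proof is correct, and every step is justified: property (3) together with \eqref{min}, the identity $\varphi\left(\V_{k}A_k\right)=\V_{k}\varphi(A_k)$ for a finite index set, the sub\-/homogeneity $\varphi(\theta u)\miu\theta\varphi(u)$ for $\theta\in[0,1]$ coming from convexity and $\varphi(0)=0$ (applied with $\theta=\phis(nx-k)\miu 1/2$), and Jensen's inequality on each cell $[k/n,(k+1)/n]$. The paper's proof follows the same skeleton up to and including the Jensen step, but the two arguments part ways in the final accounting. The paper keeps the supremum over $k$, enlarges each inner integral $\int_{k/n}^{(k+1)/n}$ to $\int_a^b$, and then passes from $\int_a^b\V_{k}n\,\phis(nx-k)\,dx$ to $\V_{k}\,n\int_a^b\phis(nx-k)\,dx\miu\|\phis\|_1$; this interchange of integral and supremum is written as an equality but in general one only has $\int\V_k\ \mau\ \V_k\int$, i.e.\ the wrong direction for an upper bound, and in fact $\int_a^b\V_{k}n\,\phis(nx-k)\,dx$ grows like $n$ because of \eqref{min}. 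You instead replace $\V$ by $\sum$ immediately after applying $\varphi$, keep each Jensen integral on its own cell, and use the fact that the cells $[k/n,(k+1)/n]$, $k\in{\mathcal J}_n$, are non\-/overlapping and contained in $[a,b]$, so the sum reassembles exactly $I^{\varphi}\left[\phis(2)^{-1}\lambda(f-g)\right]$ while each weight contributes $n\int_a^b\phis(nx-k)\,dx\miu\|\phis\|_1$. This summation\-/plus\-/disjointness bookkeeping is the rigorous way to produce the constant $\|\phis\|_1$, and your remark that the convexity step is what allows the weight $\phis(nx-k)$ to be extracted linearly is precisely the crux; your version of the ending is the one that should be preferred.
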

\begin{proof}
Using the property 3. of the operators $\K_n$, and condition (\ref{min}), we can write what follows:
$$
I^{\varphi}\left[ \lambda\left( \K_n(f, \cdot) - \K_n(g, \cdot) \right)\right]\ =\ \int_a^b \varphi\left( \lambda \left| \K_n(f, x) - \K_n(g, x)   \right|   \right)\, dx
$$
$$
\hskip-6.6cm \miu\ \int_a^b \varphi\left( \lambda\, \K_n(|f-g|, x) \right)\, dx
$$
$$
\miu\ \int_a^b \varphi\left( \lambda\, \phis(2)^{-1} \V_{k \in {\mathcal J}_n} \left[n \int_{k/n}^{(k+1)/n} |f(u)-g(u)|\, du \right] \phis(nx-k) \right)\, dx.
$$
Now, it is easy to observe that, the following equality holds:
\be \label{scambio}
\varphi\left( \V_{k \in {\mathcal J}} A_k  \right)\ =\ \V_{k \in {\mathcal J}}\varphi\left( A_k  \right),
\ee
for any finite set of indexes ${\mathcal J} \subset \Z$, since $\varphi$ is non-decreasing. Thus, using (\ref{scambio}), the convexity of $\varphi$ with the fact that by Lemma \ref{lemma2}, $\phis(nx-k) \miu 1/2$, for every $k \in {\mathcal J}_n$, and the Jensen's inequality, we obtain:
$$
\hskip-5cm I^{\varphi}\left[ \lambda\left( \K_n(f, \cdot) - \K_n(g, \cdot) \right)\right]\ 
$$
$$
\miu\ \int_a^b \V_{k \in {\mathcal J}_n}  \varphi\left( \lambda\, \phis(2)^{-1} \left[n \int_{k/n}^{(k+1)/n} |f(u)-g(u)|\, du \right] \phis(nx-k) \right)\, dx
$$
$$
\miu\ \int_a^b \V_{k \in {\mathcal J}_n} \phis(nx-k)\,  \varphi\left( \lambda\, \phis(2)^{-1} \left[n \int_{k/n}^{(k+1)/n} |f(u)-g(u)|\, du \right] \right)\, dx
$$
$$
\hskip-0.2cm \miu\ \int_a^b dx \left\{\V_{k \in {\mathcal J}_n} \phis(nx-k)\, n \int_{k/n}^{(k+1)/n}  \varphi\left( \lambda\, \phis(2)^{-1} |f(u)-g(u)|  \right)\, du \right\}
$$
$$
\hskip-1.2cm \miu\ \int_a^b dx \left\{ \V_{k \in {\mathcal J}_n} \phis(nx-k)\, n \int_{a}^{b}  \varphi\left( \lambda\, \phis(2)^{-1} |f(u)-g(u)|  \right)\, du\right\}
$$
$$
\hskip-2.8cm =\ I^{\varphi}\left[ \lambda\, \phis(2)^{-1} \left(f(\cdot)-g(\cdot)\right) \right]\, \V_{k \in {\mathcal J}_n}  n\, \int_a^b  \phis(nx-k)\, dx.
$$
Now, using the change of variable $y=nx$, and the property:
$$
\|\phis\|_1\ =\ \int_{\R} \phis(y)\ dy\ =\ \int_{\R} \phis(y - k)\ dy,
$$
for every $k \in \Z$, we finally obtain:
$$
\hskip-5cm I^{\varphi}\left[ \lambda\left( \K_n(f, \cdot) - \K_n(g, \cdot) \right)\right]\ 
$$
$$
\miu\ I^{\varphi}\left[ \lambda\, \phis(2)^{-1} \left(f(\cdot)-g(\cdot)\right) \right]\, \left[ \V_{k \in {\mathcal J}_n}  \int_\R \phis(y-k)\, dy \right]
$$
$$
\hskip-2.7cm =\ I^{\varphi}\left[ \phis(2)^{-1} \lambda\, \left(f(\cdot)-g(\cdot)\right) \right]\, \|\phis\|_1,
$$
for every $n \in \N^+$ sufficiently large.
\end{proof}
Now, we are able to prove the main result of this section, i.e., a modular convergence theorem for the Kantorovich max-product NN operators.
\begin{theorem} \label{th4}
Let $f \in L^{\varphi}_+([a,b])$ be fixed. Then there exists $\lambda>0$ such that:
$$
\lim_{n \to +\infty} I^{\varphi}\left[\lambda \left( \K_n(f, \cdot) - f(\cdot)  \right)\right]\ =\ 0.
$$
\end{theorem}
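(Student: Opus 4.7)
The plan is to run the standard ``density plus modular inequality'' scheme that lifts a convergence statement from continuous functions to arbitrary $f \in L^{\varphi}_+$. The three ingredients already available are: the density of $C_+([a,b])$ in $L^{\varphi}_+([a,b])$ with respect to modular convergence (Section \ref{sec2}), Theorem \ref{th2} (convergence for continuous data), and Theorem \ref{th3} (the modular inequality for $\K_n$).

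Given $f \in L^{\varphi}_+([a,b])$, the density yields $\lambda_0 > 0$ and a sequence $(g_m) \subset C_+([a,b])$ with $I^{\varphi}[\lambda_0(f - g_m)] \to 0$ as $m \to +\infty$. I would then fix once and for all
$$
\lambda\ :=\ \frac{\lambda_0\,\phis(2)}{3}\ >\ 0,
$$
the precise value being dictated by the constant $\phis(2)^{-1}$ in Theorem \ref{th3} and the forthcoming three-term convex splitting: indeed $3\lambda\,\phis(2)^{-1} = \lambda_0$, and moreover $3\lambda = \lambda_0\phis(2) \miu \lambda_0$ since $\phis(2) \miu \phis(0) \miu 1/2$ by Lemma \ref{lemma2} together with $(\Sigma 2)$.

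For an arbitrary $g \in C_+([a,b])$, I would split
$$
\K_n(f,\cdot) - f\ =\ [\K_n(f,\cdot)-\K_n(g,\cdot)]\ +\ [\K_n(g,\cdot)-g]\ +\ [g-f]
$$
and apply the convexity of $\varphi$ with equal weights $1/3$, obtaining
$$
I^{\varphi}[\lambda(\K_n f - f)]\ \miu\ \frac{1}{3}\bigl\{ I^{\varphi}[3\lambda(\K_n f - \K_n g)] + I^{\varphi}[3\lambda(\K_n g - g)] + I^{\varphi}[3\lambda(g-f)] \bigr\}.
$$
By Theorem \ref{th3} the first summand is bounded by $\|\phis\|_1\, I^{\varphi}[\lambda_0(f-g)]$; the third is bounded by $I^{\varphi}[\lambda_0(g-f)]$ since $3\lambda \miu \lambda_0$ and $\varphi$ is non-decreasing. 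Given $\ep > 0$, I would pick $g = g_m$ with $m$ large enough that $I^{\varphi}[\lambda_0(f - g_m)] < \ep$, which controls the first and third summands uniformly in $n$; the middle summand then tends to $0$ as $n \to +\infty$ by Theorem \ref{th2} applied with parameter $3\lambda$ to the fixed continuous function $g_m$. This yields $\limsup_n I^{\varphi}[\lambda(\K_n f - f)] \miu (\|\phis\|_1 + 1)\,\ep/3$, and the arbitrariness of $\ep$ concludes the proof.

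The main obstacle I anticipate is purely bookkeeping but conceptually important: the coupling between $\lambda$, the density parameter $\lambda_0$, and the contraction factor $\phis(2)^{-1}\mau 2$ forced by Theorem \ref{th3} means one cannot run the argument for every $\lambda > 0$, but must fix $\lambda$ in advance small enough relative to $\lambda_0$. This is why the statement asserts only the existence of some $\lambda$ (i.e., modular convergence) rather than norm convergence, consistently with the discussion of the $\Delta_2$-condition in Section \ref{sec2}.
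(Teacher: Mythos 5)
Your proposal is correct and follows essentially the same route as the paper: density of $C_+([a,b])$ in $L^{\varphi}_+([a,b])$, the three-term convex splitting with weights $1/3$, Theorem \ref{th3} for the first term, and Theorem \ref{th2} for the middle term. Your explicit choice $\lambda = \lambda_0\phis(2)/3$ is just a concrete instance of the paper's condition $\max\{\phis(2)^{-1}3\lambda,\,3\lambda\}\miu\bar\lambda$, and your care in fixing the density parameter $\lambda_0$ independently of $\ep$ (via an approximating sequence) is a slightly cleaner rendering of the same argument.
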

\begin{proof}
Let $\ep>0$ be fixed. Since $C_+([a,b])$ is modularly dense in $L^{\varphi}_+([a,b])$, there exists $\bar \lambda >0$ and $g \in C_+([a,b])$ such that:
\be \label{densitaa}
I^{\varphi}\left[\bar \lambda \left( f(\cdot) - g(\cdot)  \right)\right]\ <\ \ep.
\ee
Now, choosing $\lambda >0$ such that:
$$
\max \left\{ \phis(2)^{-1}3\, \lambda,\ 3\, \lambda \right\}\, \miu\ \bar \lambda,
$$
by the property of $I^\varphi$, using Theorem \ref{th3}, and the convexity of $\varphi$, we can write what follows:
\vskip0.1cm
$$
\hskip-7cm I^{\varphi}\left[\lambda \left( \K_n(f, \cdot) - f(\cdot)  \right)\right]\ 
$$
$$
\hskip0.8cm \miu\ {1 \over 3} \left\{I^{\varphi}\left[3 \lambda \left( \K_n(f, \cdot) - \K_n(g, \cdot)  \right)\right] + I^{\varphi}\left[3 \lambda \left( \K_n(g, \cdot) - g(\cdot)  \right)\right] \right.
$$
$$
\hskip-7.2cm +\left. \ I^{\varphi}\left[3 \lambda \left( g(\cdot) - f(\cdot)  \right)\right]  \right\}
$$
$$
\miu\ \|\phis\|_1\, I^{\varphi}\left[ \phis(2)^{-1} 3 \lambda\,  \left|f(\cdot)-g(\cdot)\right|\right]+ I^{\varphi}\left[ 3 \lambda \left( \K_n(g, \cdot) - g(\cdot)  \right)\right]  
$$
$$
\hskip-7.4cm +\ I^{\varphi}\left[3 \lambda \left( g(\cdot) - f(\cdot)  \right)\right]
$$
$$
\hskip-0.2cm \miu\ \left( \|\phis\|_1 + 1\right)I^{\varphi}\left[\bar \lambda \left( g(\cdot) - f(\cdot)  \right)\right] + I^{\varphi}\left[\bar \lambda \left( \K_n(g, \cdot) - g(\cdot)  \right)\right], 
$$
for every $n \in \N^+$ sufficiently large. Now, using (\ref{densitaa}):
$$
I^{\varphi}\left[\lambda \left( \K_n(f, \cdot) - f(\cdot)  \right)\right]\, \miu\ \left( \|\phis\|_1 + 1\right)\, \ep\ + I^{\varphi}\left[\bar \lambda \left( \K_n(g, \cdot) - g(\cdot)  \right)\right],
$$
and since Theorem \ref{th2} holds, we get:
$$
I^{\varphi}\left[\bar \lambda \left( \K_n(g, \cdot) - g(\cdot)  \right)\right]\ <\ \ep,
$$
for every $n \in \N^+$ sufficiently large. In conclusion, we finally obtain:
$$
I^{\varphi}\left[\lambda \left( \K_n(f, \cdot) - f(\cdot)  \right)\right]\, \miu\ \left( \|\phis\|_1 + 2\right)\, \ep,
$$
for every $n \in \N^+$ sufficiently large. Thus the proof follows by the arbitrariness of $\ep>0$.
\end{proof}
Note that, all the above convergence results can be extended to the case of not-necessarily non-negative functions. Indeed, for any fixed $f:[a,b] \to \R$, if $\inf_{x \in [a,b]} f(x) =: c <0$, we can consider the sequences $(\K_n(f-c, \cdot)+c)_{n \in \N+}$ that it turns out to be convergent to $f$ (in all the above considered senses, provided $f$ in suitable spaces). For more details, see e.g., \cite{COGA3,COGA4,COVI3}.


\section{Quantitative estimates in Orlicz spaces} \label{sec5}

In \cite{COVI7}, quantitative estimates for the operators $\K_n$ have been proved for functions belonging to $C_+([a,b])$, exploiting the modulus of continuity of the function to be approximated, and for functions belonging to $L^p_+([a,b])$, $1 \miu p < +\infty$, using the well-known definition of the K-functionals introduced by Peetre \cite{PE1}. Here, we prove quantitative estimates in the general setting of $L^{\varphi}_+([a,b])$, with $\varphi$ convex. For this purpose, we need to introduce suitable K-functionals in this setting.

  For any fixed $f \in L^\varphi_+([a,b])$, there exist $\lambda>0$ such that, the K-functional of $f$ can be defined as follows:
\be
{\mathcal K}(f, \lambda, \delta)_\varphi\ :=\ \inf_{g \in C^1_+([a,b])}\ \left\{ I^{\varphi}\left[ \lambda\, (f(\cdot)-g(\cdot) ) \right]\ +\ \delta\, \varphi\left(\|g'\|_{\infty}\right) \right\},
\ee
$\delta > 0$, where $C^1_+([a,b])$ is the space of non-negative differentiable functions with continuous derivative. Note that, $C^1_+([a,b]) \subset E^{\varphi}([a,b]) \subset L^\varphi_+([a,b])$, then the definition of ${\mathcal K}(f, \lambda, \delta)_\varphi$ is well-posed, i.e., for any fixed $f \in L^\varphi_+([a,b])$, there exists $\lambda>0$ such that:
$$
{\mathcal K}(f, \lambda, \delta)_\varphi\ \miu\ I^{\varphi}\left[ 2\lambda\, f(\cdot) \right]\ +\ I^{\varphi}\left[ 2\lambda\, g(\cdot) \right]\ +\ \delta \varphi\left(\|g'\|_{\infty}\right)\ <\ +\infty,
$$
for every $g \in C^1_+([a,b])$.

  The K-functional ${\mathcal K}(f, \lambda, \delta)_\varphi$, provides information about the approximation properties of $f$: the inequality ${\mathcal K}(f, \lambda,\delta)_\varphi<\ep$, for some $\lambda>0$ and $\delta>0$, implies that $f$ can be approximated with an error $I^{\varphi}\left[ \lambda\, (f(\cdot)-g(\cdot) ) \right]<\ep$, and $g$ belongs to $C^1_+([a,b])$, whose sup-norm $\|\cdot\|_{\infty}$ of $g'$ (more precisely $\varphi\left( \|g'\|_{\infty} \right)$) is not too large. 
  
  We can prove the following.
\begin{theorem} \label{th5.1}
Let $\sigma(x)$ be a sigmoidal function which satisfies condition $(\Sigma 3)$ with $\alpha \mau 1$. Moreover, let $f \in L^\varphi_+([a,b])$, be fixed. Then, there exist $\lambda_0>0$ and $\lambda_1 >0$, such that:
$$
I^{\varphi}\left[ \lambda_1 (K_n(f, \cdot)-f(\cdot))\right]\ \miu\ A_1\, {\mathcal K}\left(f, \lambda_0, A_2\, n^{-1}\right)_\varphi,
$$
for every sufficiently large $n \in \N^+$, where:
$$
A_1\ :=\ \|\phis\|_1 + 1, \quad
and \quad
A_2\ :=\ {3\, \lambda_0 (b-a) \over 4\, \phis(2) \left( \|\phis\|_1 + 1 \right)}.
$$
\end{theorem}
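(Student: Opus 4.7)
The plan is to exploit the K-functional by introducing a smooth approximant $g\in C^1_+([a,b])$, decomposing the error $\K_n(f,\cdot)-f$ into three pieces that can be controlled respectively by (i) the modular inequality of Theorem \ref{th3}, (ii) a quantitative pointwise estimate available for smooth $g$, and (iii) a term that will be absorbed in the K-functional, and finally taking the infimum over $g$.

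First I would fix an arbitrary $g\in C^1_+([a,b])$ and, as in the proof of Theorem \ref{th4}, use the triangle inequality together with the convexity of $\varphi$ to bound
$$
I^{\varphi}[\lambda_1(\K_n(f,\cdot)-f(\cdot))]\ \miu\ I^{\varphi}[3\lambda_1(\K_n(f,\cdot)-\K_n(g,\cdot))]+I^{\varphi}[3\lambda_1(\K_n(g,\cdot)-g(\cdot))]+I^{\varphi}[3\lambda_1(g(\cdot)-f(\cdot))].
$$
Setting $\lambda_0:=3\lambda_1/\phis(2)$, Theorem \ref{th3} bounds the first summand by $\|\phis\|_1\,I^{\varphi}[\lambda_0(f-g)]$, and since $\phis(2)\miu 1/2<1$ one has $3\lambda_1\miu\lambda_0$, so monotonicity of $\varphi$ yields $I^{\varphi}[3\lambda_1(g-f)]\miu I^{\varphi}[\lambda_0(g-f)]$. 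The first and third summands thus combine into $(\|\phis\|_1+1)\,I^{\varphi}[\lambda_0(f-g)]$, exactly the first ingredient of the claimed K-functional bound.

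Next comes the main quantitative step: estimating $\K_n(g,\cdot)-g(\cdot)$ for smooth $g$. Using $\K_n(\mathbf{1},x)=1$ together with properties (1), (3), (4) of $\K_n$ and the Lipschitz bound $|g(u)-g(x)|\miu\|g'\|_\infty|u-x|$, I obtain pointwise
$$
|\K_n(g,x)-g(x)|\,=\,|\K_n(g,x)-\K_n(g(x)\mathbf{1},x)|\,\miu\,\K_n(|g(\cdot)-g(x)|,x)\,\miu\,\|g'\|_\infty\,\K_n(|\cdot-x|,x).
$$
The residual object $\K_n(|\cdot-x|,x)$ is then controlled via the elementary averaging $n\int_{k/n}^{(k+1)/n}|u-x|\,du\miu 1/(2n)+|k/n-x|$, the moment bound $\V_{k\in\Z}|nx-k|\phis(nx-k)\miu m_1(\phis)\miu 1/2$ coming from Lemma \ref{lemma2} (which is exactly where the hypothesis $\alpha\mau 1$ enters), and the uniform lower bound $\V_{k\in\mathcal{J}_n}\phis(nx-k)\mau\phis(2)$ from (\ref{min}). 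Putting these together gives $\K_n(|\cdot-x|,x)\miu C/(n\phis(2))$ for a numerical constant $C$, hence $\|\K_n(g,\cdot)-g\|_\infty\miu C\|g'\|_\infty/(n\phis(2))$ for all sufficiently large $n$.

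For $n$ large enough that $3C\lambda_1/(n\phis(2))\miu 1$, the convexity of $\varphi$ (which yields $\varphi(ty)\miu t\varphi(y)$ for $t\in[0,1]$ because $\varphi(0)=0$) then gives
$$
I^{\varphi}[3\lambda_1(\K_n(g,\cdot)-g(\cdot))]\ \miu\ (b-a)\,\varphi\!\left(\tfrac{3C\lambda_1\|g'\|_\infty}{n\phis(2)}\right)\ \miu\ \tfrac{3C(b-a)\lambda_1}{n\phis(2)}\,\varphi(\|g'\|_\infty),
$$
which, after substituting $\lambda_1=\lambda_0\phis(2)/3$, becomes a quantity of the form $c\lambda_0(b-a)n^{-1}\varphi(\|g'\|_\infty)$ for some numerical $c$. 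Adding this to the contribution of the first and third summands, factoring out $A_1=\|\phis\|_1+1$, and taking the infimum over $g\in C^1_+([a,b])$ yields $A_1\,\mathcal{K}(f,\lambda_0,A_2 n^{-1})_\varphi$ with $A_2$ of the announced shape.

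The main obstacle is the moment estimate $\K_n(|\cdot-x|,x)\miu C/(n\phis(2))$: the supremum (max-product) structure prevents the usual linear manipulations that work for Bernstein--Kantorovich operators, so one must split $|u-x|\miu|u-k/n|+|k/n-x|$ with the correct integral averaging, bound the $|k/n-x|\phis(nx-k)$ piece via $m_1(\phis)$ using Lemma \ref{lemma2}, and invoke the uniform lower bound (\ref{min}) to cancel the max-product denominator. Once this quantitative ingredient is in place, the rest is a modular calculation strictly parallel to the proof of Theorem \ref{th4}.
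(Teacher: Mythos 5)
Your proposal is correct and follows essentially the same route as the paper's proof: the three-term splitting via convexity of $\varphi$, Theorem \ref{th3} for the $\K_n(f)-\K_n(g)$ piece, the constant-function trick $h_x(t)=g(x)$ together with the Lipschitz bound and the moments $m_0(\phis), m_1(\phis)\miu 1/2$ from Lemma \ref{lemma2} (where $\alpha\mau 1$ enters) and the lower bound (\ref{min}) for the middle piece, and finally $\varphi(ty)\miu t\varphi(y)$ plus the infimum over $g$. The only cosmetic difference is that the paper first fixes $\lambda_0$ with $I^{\varphi}[\lambda_0 f]<+\infty$ and then chooses $\lambda_1$ accordingly, whereas you define $\lambda_0$ from $\lambda_1$; this does not affect the argument.
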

\begin{proof}
First of all, since $f \in L^\varphi_+([a,b])$, there exists $\lambda_0>0$ such that $I^\varphi[\lambda_0 f]<+\infty$. Then, we choose $\lambda_1 > 0$ such that:
$$
\max\left\{ 3 \lambda_1,\ \phis(2)^{-1}3\lambda_1    \right\}\ \miu\ \lambda_0.
$$
Let now $g \in C^1_+([a,b])$ be fixed. Using the properties of $I^\varphi$, and Theorem \ref{th3}, can write what follows:
$$
\hskip-7.6cm I^\varphi\left[ \lambda_1\, (\K_n(f, \cdot)-f(\cdot)) \right]
$$
$$
\miu I^\varphi\left[ 3\lambda_1 \left(\K_n(f, \cdot)-\K_n(g, \cdot)\right) \right]+ I^\varphi\left[ 3\lambda_1 \left(\K_n(g, \cdot) - g(\cdot)\right) \right] + I^\varphi\left[ 3\lambda_1 (g(\cdot)-f(\cdot))\right]
$$ 
$$
\hskip-1.8cm \miu\ \left( \|\phis\|_1 + 1 \right) I^\varphi\left[ \lambda_0\, (g(\cdot)-f(\cdot))\right] + I^\varphi\left[ \lambda_0\, \left(\K_n(g, \cdot) - g(\cdot)\right) \right]
$$
for sufficiently large $n \in \N^+$. Now, for every fixed $x \in [a,b]$, we can define the function $h_x(t):=g(x)$, $t \in [a,b]$, then using properties 4. and 3. of $\K_n$, we obtain:
$$
\left|\K_n(g, x) - g(x)\right|\ =\ \left| \K_n(g, x) - \K_n(h_x, x) \right|\ \miu\ \K_n(|g - h_x|, x).
$$
Since $g \in C^1_+([a,b])$, it is easy to observe that:
$$
|g(t) - h_x(t)|\ =\ |g(t) - g(x)|\ \miu\ \|g'\|_{\infty}\, |t-x|\ =:\ \|g'\|_{\infty} \Psi_x(t),
$$
for every $t \in [a,b]$. Thus, using the properties 1. and 4. of $\K_n$, we obtain:
$$
\hskip-5cm \K_n(|g - h_x|, x)\ \miu\ \|g'\|_{\infty}\, \K_n(\Psi_x, x)
$$
$$
\hskip-1.5cm =\ \|g'\|_{\infty}\, \frac{\disp \V_{k \in {\mathcal J}_n }\left[ n \int_{k/n}^{(k+1)/n} |x - u|\, du \right] \phi_{\sigma}(nx-k)}{\disp \V_{k \in {\mathcal J}_n }\phi_{\sigma}(nx-k)}
$$
$$
\hskip-0.8cm \miu\ \frac{\|g'\|_{\infty}}{\phis(2)}\left[  \V_{k \in {\mathcal J}_n }\left[ n \int_{k/n}^{(k+1)/n} |x - k/n|\, du \right] \phi_{\sigma}(nx-k)  \right.
$$
$$
\left. \hskip-2.2cm + \V_{k \in {\mathcal J}_n }\left[ n \int_{k/n}^{(k+1)/n} |k/n - u|\, du \right] \phi_{\sigma}(nx-k) \right]
$$
$$
\hskip0.6cm =\ \frac{\|g'\|_{\infty}}{\phis(2)}\, \left[ \V_{k \in {\mathcal J}_n }|x - k/n|\, \phi_{\sigma}(nx-k) +\ {1 \over 2} \V_{k \in {\mathcal J}_n } {1 \over n} \phi_{\sigma}(nx-k) \right]
$$
$$
\hskip0.5cm \miu\ n^{-1}\, \frac{\|g'\|_{\infty}}{\phis(2)}\, \left[ m_1(\phis)\ +\ {1 \over 2}\, m_0(\phis) \right] \miu\ n^{-1}\, \frac{3\, \|g'\|_{\infty}}{4\, \phis(2)} <+\infty,
$$
in view of Lemma \ref{lemma2}, and since $\alpha \mau 1$. Then, using the convexity of $\varphi$, we have:
$$
I^\varphi\left[ \lambda_0\, \left(\K_n(g, \cdot) - g(\cdot)\right) \right]\ \miu\ {3\, \lambda_0 \over 4\, \phis(2)}\, n^{-1} \varphi\left( \|g'\|_{\infty}\right)\, (b-a),
$$
for every sufficiently large $n \in \N^+$. In conclusion, rearranging all the above inequalities we obtain:
\vskip0.01cm
$$
\hskip-6.8cm I^\varphi\left[ \lambda_1\, (\K_n(f, \cdot)-f(\cdot)) \right]
$$
$$
\miu\  \left( \|\phis\|_1 + 1 \right) I^\varphi\left[ \lambda_0\, (g(\cdot)-f(\cdot))\right] + {3\, \lambda_0 (b-a) \over 4\, \phis(2)}\, n^{-1} \varphi\left( \|g'\|_{\infty}\right),
$$
and for the arbitrariness of $g$, we finally get:
$$
I^\varphi\left[ \lambda_1\, (\K_n(f, \cdot)-f(\cdot)) \right]\ \miu\ A_1\, {\mathcal K}\left(f, \lambda_0, A_2\, n^{-1}\right)_\varphi,
$$
for every $n \in \N^+$ sufficiently large, where:
$$
A_1 := \left( \|\phis\|_1 + 1 \right), \quad \quad A_2 := {3\, \lambda_0 (b-a) \over 4\, \phis(2) \left( \|\phis\|_1 + 1 \right)}.
$$
\end{proof}
%


\vskip0.2cm

\section*{Acknowledgments}

The authors are members of the Gruppo  
Nazionale per l'Analisi Matematica, la Probabilità e le loro  
Applicazioni (GNAMPA) of the Istituto Nazionale di Alta Matematica (INdAM). 

\noindent The authors are partially supported by the "Department of Mathematics and Computer Science" of the University of Perugia (Italy). Moreover, the first author of the paper has been partially supported within the 2017 GNAMPA-INdAM Project ``Approssimazione con operatori discreti e problemi di minimo per funzionali del calcolo delle variazioni con applicazioni all'imaging''.

\noindent This is a post-peer-review, pre-copyedit version of an article published in  Results in Mathematics. The final authenticated version is available online at: http://dx.doi.org/0.1007/s00025-018-0799-4

\vskip0.1cm
%
%


\begin{thebibliography}{99}

\bibitem{ANCOGA1} G.A. Anastassiou, L. Coroianu, S.G. Gal, Approximation by a nonlinear Cardaliaguet-Euvrard neural network operator of max-product kind, J. Comp. Anal. Appl. 12 (2) (2010), 396-406.

\bibitem{ING1} F. Asdrubali, G. Baldinelli, F. Bianchi, D. Costarelli, A. Rotili, M. Seracini, G. Vinti, Detection of thermal bridges from thermographic images by means of image processing approximation algorithms, Applied Mathematics and Computation, 317 (2018) 160-171. 

\bibitem{BAGR1} K. R. Ball, C. Grant, W. R. Mundy, T. J. Shafera, A multivariate extension of mutual information for growing neural networks, Neural Networks, 95 (2017) 29-43.

\bibitem{BAKAVI1} C. Bardaro, H. Karsli, G. Vinti, Nonlinear integral operators with homogeneous kernels: Pointwise approximation theorems, Applicable Analysis 90 (3-4) (2011), 463-474.

\bibitem{BAMA1} C. Bardaro, I. Mantellini, On convergence properties for a class of Kantorovich discrete operators,  Numerical functional analysis and optimization, 33 (4) (2012), 374-396.

\bibitem{BAMUVI} C. Bardaro, J. Musielak, G. Vinti, Nonlinear Integral Operators and Applications, New York, Berlin: De Gruyter Series in Nonlinear Analysis and Applications 9, 2003.

\bibitem{BECOGA1} B. Bede, L. Coroianu, S.G. Gal, Approximation by max-product type operators, Springer International Publishing, 2016, DOI: 10.1007/978-3-319-34189-7.


\bibitem{BBS1} A. Boccuto, A. V. Bukhvalov, A. R. Sambucini, Inequalities in classical spaces with mixed norms, Positivity, 6 (2002) 393-411.

\bibitem{BCS1} A. Boccuto, D. Candeloro, A. R. Sambucini, Vitali-type theorems for filter convergence related to Riesz space-valued modulars and applications to stochastic processes, Journal of Mathematical Analysis and Applications, 419 (2), (2014) 818-838. 

\bibitem{BCS2} A. Boccuto, D. Candeloro, A. R. Sambucini, $L^p$ spaces in vector lattices and applications, in print in: Math. Slov. 67 n. 6 1409-1426 (2019). DOI: https://doi.org/10.1515/ms-2017-0060

\bibitem{BBM1} A. Bono-Nuez, C. Bernal-Ru\'iz, B. Mart\'in-del-Br\'io, F. J. P\'erez-Cebolla, A. Mart\'inez-Iturbe, Recipient size estimation for induction heating home appliances based on artificial neural networks, Neural Computing and Applications, 28 (11) (2017) 3197-3207.

\bibitem{CACH1} F. Cao, Z. Chen, The approximation operators with sigmoidal functions, Comput. Math. Appl. 58 (4) (2009) 758-765.

\bibitem{CACH2} F. Cao, Z. Chen, The construction and approximation of a class of neural networks operators with ramp functions, J. Comput. Anal. Appl. 14 (1) (2012) 101-112.

\bibitem{CALIPA1} F. Cao, B. Liu, D.S. Park, Image classification based on effective extreme learning machine, Neurocomputing 102 (2013), 90-97.

\bibitem{CHGE} G. H. L. Cheang, Approximation with neural networks activated by ramp sigmoids, J. Approx. Theory 162 (2010) 1450-1465.

\bibitem{COGA1} L. Coroianu, S.G. Gal, Approximation by nonlinear generalized sampling operators of max-product kind, Sampling Th. Signal Image Processing 9 (1-3) (2010) 59-75. 

\bibitem{COGA2} L. Coroianu, S.G. Gal, Approximation by max-product sampling operators based on sinc-type kernels, Sampling Th. Signal Image Processing 10 (3) (2011) 211-230.

\bibitem{COGA3} L. Coroianu, S.G. Gal, Saturation results for the truncated max-product sampling operators based on sinc and Fej\'er-type kernels, Sampling Th. Signal Image Processing 11 (1) (2012) 113-132. 

\bibitem{COGA4} L. Coroianu, S.G. Gal, Saturation and inverse results for the Bernstein max-product operator, Period. Math. Hungar 69 (2014), 126-133.

\bibitem{COGA5} L. Coroianu, S.G. Gal, $L^p$-approximation by truncated max-product sampling operators of Kantorovich-type based on Fejer kernel, Journal of Integral Equations and Applications, 29 (2) (2017), 349-364.

\bibitem{CO1} D. Costarelli, Neural network operators: constructive interpolation of multivariate functions, Neural Networks 67 (2015) 28-36.

\bibitem{CMV1} D. Costarelli, A.M. Minotti, G. Vinti, Approximation of discontinuous signals by sampling Kantorovich series, Journal of Mathematical Analysis and Applications, 450 (2) (2017) 1083-1103.

\bibitem{COSP1} D. Costarelli, R. Spigler, A collocation method for solving nonlinear Volterra integro-differential equations of the neutral type by sigmoidal functions, Journal of Integral Equations and Applications, 26 (1) (2014) 15-52.

\bibitem{COVI2} D. Costarelli, G. Vinti, Degree of approximation for nonlinear multivariate sampling Kantorovich operators on some functions spaces, Numerical Functional Analysis and Optimization, 36 (8) (2015) 964-990. 

\bibitem{COVI3} D. Costarelli, G. Vinti, Approximation by max-product neural network operators of Kantorovich type, Results in Mathematics, 69 (3) (2016), 505-519. 

\bibitem{COVI4} D. Costarelli, G. Vinti, Max-product neural network and quasi-interpolation operators activated by sigmoidal functions, Journal of Approximation Theory, 209 (2016), 1-22.

\bibitem{COVI5} D. Costarelli, G. Vinti, Pointwise and uniform approximation by multivariate neural network operators of the max-product type, Neural Networks, 81 (2016) 81-90.

\bibitem{COVI16}  D. Costarelli, G. Vinti, Saturation classes for max-product neural network operators activated by sigmoidal functions, Results in Mathematics, 72 (3) (2017) 1555-1569.

\bibitem{COVI6} D. Costarelli, G. Vinti, Convergence for a family of neural network operators in Orlicz spaces, Mathematische Nachrichten, 290 (2-3) (2017) 226-235. 

\bibitem{COVI7} D. Costarelli, G. Vinti, Estimates for the neural network operators of the max-product type with continuous and p-integrable functions, in print in: Results in Mathematics (2017).

\bibitem{GOLO1} D. Gotleyb, G. Lo Sciuto, C. Napoli, R. Shikler, E. Tramontana, M. Wozniak, Characterization and Modeling of Organic Solar Cells by Using Radial Basis Neural Networks, in: Artificial Intelligence and Soft Computing, (2016) 91-103, DOI: $10.1007/978-3-319-39378-0\_9$.

\bibitem{LLZ1} G. Lai,  Z. Liu,  Y. Zhang, C. L. Philip Chen, Adaptive position/attitude tracking control of aerial robot with unknown inertial matrix based on a new robust neural identifier, IEEE Trans. Neural Netw. and Learning Systems, 27 (1) (2016), 18-31.

\bibitem{LWZ1} P. Liu, J. Wang, Z. Zeng, Multistability of delayed recurrent neural networks with Mexican hat activation functions, Neural Computation, 29 (2) (2017), 423-457.

\bibitem{LIV1} D.J. Livingstone, Artificial Neural Networks: Methods and Applications (Methods in Molecular Biology), Humana Press, 2008.

\bibitem{MAI1} V. Maiorov, Approximation by neural networks and learning theory, J. Complexity 22 (1) (2006) 102-117.

\bibitem{MU1} J. Musielak, Orlicz Spaces and Modular Spaces, Springer-Verlag, Lecture Notes in Math. 1034, 1983.

\bibitem{MUORL} J. Musielak, W. Orlicz, On modular spaces, Studia Math. 28 (1959), 49-65.

\bibitem{OL1} J.J. Olivera, Global exponential stability of nonautonomous neural network models with unbounded delays, Neural Networks, 96 (2017), 71-79.

\bibitem{PE1} J. Peetre, A new approach in interpolation spaces, Studia Math. 34 (1970) 23-42.

\bibitem{RIRU1} B. Rister, D. L. Rubin, Piecewise convexity of artificial neural networks, Neural Networks, 94 (2017) 34-45.

\bibitem{SXJ1} A. Sahoo,  H. Xu,  S. Jagannathan, Adaptive neural network-based event-triggered control of single-input single-output nonlinear discrete-time systems, IEEE Trans. Neural Netw. and Learning Systems, 27 (1) (2016), 151-164. 

\bibitem{SS1} G. Stamov, I. Stamova, Impulsive fractional-order neural networks with time-varying delays: almost periodic solutions, Neural Computing and Applications, 28 (11) (2017) 3307-3316.

\end{thebibliography}
\end{document}